\newtheorem{thm}{Theorem}
\newtheorem{lem}[thm]{Lemma}
\newtheorem{prop}[thm]{Proposition}
\newtheorem{rem}[thm]{Remark}
\newtheorem{ex}[thm]{Example}
\newtheorem{defn}[thm]{Definition}
\newcommand{\N}{\mathbb{N}}
\newcommand{\R}{\mathbb{R}}
\begin{document}

\title{On the global shape of continuous convex functions on Banach spaces}

\author{Daniel Azagra}
\address{ICMAT (CSIC-UAM-UC3-UCM), Departamento de An{\'a}lisis Matem{\'a}tico y Matem\'atica Aplicada,
Facultad Ciencias Matem{\'a}ticas, Universidad Complutense, 28040, Madrid, Spain.  
}
\email{azagra@mat.ucm.es}
%
%
%

\date{October 27, 2019}

\keywords{convex function, global geometry, coercive function}


\begin{abstract}
We make some remarks on the global shape of continuous convex functions defined on a Banach space $Z$. For instance we show that if $Z$ is separable then for every continuous convex function $f:Z\to\R$ there exists a unique closed linear subspace $Y_f$ of $Z$ such that, for the quotient space $X_f :=Z/Y_{f}$ and the natural projection $\pi:Z\to X_f$, the function $f$ can be written in the form
$$
f(z)=\varphi(\pi(z)) +\ell(z) \textrm{ for all } z\in Z,
$$
where $\ell\in X^{*}$ and $\varphi:X_f\to\R$ is a convex function such that $\lim_{t\to\infty}\varphi(x+tv)=\infty$ for every $x, v\in X_f$ with $v\neq 0$. This kind of result is generally false if $Z$ is nonseparable (even in the Hilbertian case $Z=\ell_{2}(\Gamma)$ with $\Gamma$ an uncountable set).

\end{abstract}

\maketitle

\section{Introduction and main results}

If a continuous convex function $f$ defined on a real Banach space $(Z, \|\cdot\|)$ is coercive (meaning that $\lim_{\|z\|\to \infty}f(z)=\infty$) then $f$ will be easier to analyze and will have some nice properties that may become very useful or even crucial in some applications (an obvious example is the existence of minimizers in the case that $Z$ is reflexive). A strongly related kind of nice convex functions is what one can call {\em essentially  coercive convex functions}, namely convex functions which are coercive up to linear perturbation. Of course, even in the case $Z=\R^n$, not every convex function is essentially coercive, but it is nonetheless true (see \cite[Lemma 4.2]{Azagra2013} and \cite[Theorem 1.11]{AzagraMudarra2}) that every convex function $f:\R^n\to\R$ admits a decomposition of the form
$$
f=g\circ P+\ell,
$$
where $P$ is the orthogonal projection onto some subspace $X$ of $\R^n$ (possibly $\{0\}$ or $\R^n$), $g:X\to\R$ is convex and coercive, and $\ell:\R^n\to\R$ is linear. Thus one could say that, up to an additive linear perturbation and a composition with a linear projection onto a subspace of possibly smaller dimension, every convex function on $\R^n$ is  essentially coercive. This decomposition property has been useful in the proofs of several recent results on global smooth approximation and extension by convex functions; see \cite{Azagra2013, AzagraMudarra1, AzagraMudarra2, Azagra2019, AzagraHajlasz}. 

It is natural to wonder whether this decomposition result should still be true of continuous convex functions defined on infinite-dimensional spaces. The purpose of this note is to give an answer to this question.  

Let us begin by setting some notation and definitions.

\begin{defn}\label{defn essential coercivity}
{\em Let $Z$ be a Hilbert space.
We will say that a function $f:Z\to\R$ is {\em essentially coercive} provided that there exists a linear function $\ell:Z\to\R$ such that
$$
\lim_{|z|\to\infty} \left( f(z)-\ell(z)\right)=\infty.
$$

If $X$ is a closed linear subspace of $Z$, we will denote by $P_{X}:Z\to X$ the orthogonal projection, and by $X^{\perp}$ the orthogonal complement of $X$ in $Z$. For a subset $V$ of $Z$, $\textrm{span}(V)$ will stand for the linear subspace spanned by the vectors of $V$, and $\overline{\textrm{span}}(V)$ for the closure of $\textrm{span}(V)$. 

Let us recall that, for a convex function $f:Z\to\R$, the subdifferential of $f$ at a point $x\in Z$ is defined as
$$
\partial f(x)=\{\xi\in Z :  f(y)\geq f(x)+ \langle \xi, y-x\rangle \textrm{ for all } y\in Z\},
$$
and each $\xi\in\partial f(x)$ is called a subgradient of $f$ at $x$. More generally, if $X$ is a Banach space with dual $X^{*}$ and $\langle \cdot, \cdot\rangle$ denotes the duality product, the subdifferential of a continuous convex function $\varphi:X\to\R$ at a point $x\in X$ is defined by
$$
\partial \varphi(x)=\{\xi\in X^{*} :  f(y)\geq f(x)+ \langle \xi, y-x\rangle \textrm{ for all } y\in X\}.
$$
As is well known, for every $x\in X$ the set $\partial \varphi(x)$ is nonempty, closed, convex and bounded. For any unexplained terms of facts in Convex Analysis we refer to the books \cite{BorweinVanderwerffbook, Rockafellar, Zalinescubook}.}
\end{defn}

Now we may give a more precise statement of the decomposition result we mentioned above.

\begin{thm}\label{rigid global behaviour of convex functions}[See the proofs of \cite[Lemma 4.2]{Azagra2013} and \cite[Theorem 1.11]{AzagraMudarra2}]
For every convex function $f:\R^n\to\R$ there exist a unique linear subspace $X_f$ of $\R^n$, a unique vector $v_{f}\in X_{f}^{\perp}$, and a unique essentially coercive convex function $c_{f}:X_f\to\R$ such that $f$ can be written in the form
$$
f(x)=c_{f}(P_{X_{f}}(x)) +\langle v_{f}, x\rangle \textrm{ for all } x\in\R^n.
$$
The subspace $X_{f}$ coincides with $\textrm{span}\{u-w : u\in\partial f(x), w\in\partial f(y), x, y\in\R^n\}$, and the vector $v_f$ coincides with $Q_{X_f}(\xi_0)$ for any $\xi_0\in\partial f(x_0)$, $x_0\in\R^n$, where $Q_{X_f}=I-P_{X_f}$ is the orthogonal projection of $\R^n$ onto $X_{f}^{\perp}$.
\end{thm}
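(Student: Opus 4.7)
The plan is to construct the decomposition explicitly from the span formula for $X_f$, verify essential coercivity of the resulting function on $X_f$, and derive uniqueness from an intrinsic computation of subdifferentials. I would first set $X_f := \textrm{span}\{u - w : u \in \partial f(x),\ w \in \partial f(y),\ x, y \in \R^n\}$, fix any $\xi_0 \in \partial f(x_0)$, and put $v_f := Q_{X_f}(\xi_0)$. The definition of $X_f$ immediately ensures $\partial f(\R^n) \subseteq \xi_0 + X_f$, so $Q_{X_f}(\xi) = v_f$ for every $\xi \in \partial f(\R^n)$; in particular $v_f \in X_f^\perp$ and is independent of the choice of $\xi_0$. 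Setting $g := f - \langle v_f, \cdot\rangle$ gives $\partial g(x) = \partial f(x) - v_f \subseteq X_f$ for every $x$, so applying the subgradient inequalities at $x$ and at $x + ty$ with $y \in X_f^\perp$ forces $g(x + ty) = g(x)$ for every $t \in \R$. Hence $g$ depends only on $P_{X_f}(x)$, and defining $c_f(u) := g(u)$ for $u \in X_f$ yields $f(x) = c_f(P_{X_f}(x)) + \langle v_f, x\rangle$.

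The core step is checking that $c_f$ is essentially coercive on $X_f$. For this I would pass through the recession function $(c_f)_\infty(v) := \lim_{t \to \infty} t^{-1}(c_f(tv) - c_f(0))$: letting $t \to \infty$ in the subgradient inequality $c_f(u + tv) \geq c_f(u) + t \langle \xi, v\rangle$ for $\xi \in \partial c_f(u)$ shows that every such $\xi$ lies in $\partial(c_f)_\infty(0)$, the subdifferential at $0$ of the sublinear function $(c_f)_\infty$. But $\partial c_f(X_f) = \partial f(\R^n) - v_f$, and by construction the differences of its elements span $X_f$, so the convex set $\partial(c_f)_\infty(0) \subseteq X_f$ has affine hull equal to $X_f$ and hence nonempty interior in $X_f$. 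Any $\ell$ in that interior satisfies $\langle \ell, v\rangle + \varepsilon |v| \leq (c_f)_\infty(v)$ for some $\varepsilon > 0$ and every $v \in X_f$, so $c_f - \ell$ has at-least-linear growth at infinity and is therefore coercive.

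For uniqueness, start from any decomposition $f(x) = \tilde c(P_{\tilde X}(x)) + \langle \tilde v, x\rangle$ with $\tilde v \in \tilde X^\perp$ and $\tilde c : \tilde X \to \R$ essentially coercive. The chain rule gives $\partial f(x) = \partial \tilde c(P_{\tilde X}(x)) + \tilde v$, so the span defining $X_f$ coincides with $\textrm{span}(\partial \tilde c(\tilde X) - \partial \tilde c(\tilde X))$. Essential coercivity of $\tilde c$ forces this latter span to equal $\tilde X$ (the contrapositive is immediate: if $v \in \tilde X$ were orthogonal to every difference of subgradients of $\tilde c$, then $\langle \xi, v\rangle$ would be constant over $\xi \in \partial \tilde c(\tilde X)$, making $\tilde c$ affine along the direction $v$, which rules out essential coercivity). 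Hence $\tilde X = X_f$; the identifications $\tilde v = v_f$ and $\tilde c = c_f$ then follow by projecting onto $X_f^\perp$ and restricting to $X_f$ respectively.

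The main obstacle is the essential coercivity step: one has to translate the purely linear-algebraic fact that differences of subgradients span $X_f$ into the analytic fact that some linear functional strictly dominates $(c_f)_\infty$ on $X_f \setminus \{0\}$. The recession function, together with the elementary observation that a convex subset of a finite-dimensional space whose affine hull is the whole space has nonempty interior, is precisely what bridges the two; the remaining paragraphs are straightforward bookkeeping with the projection $P_{X_f}$ and the subdifferential calculus.
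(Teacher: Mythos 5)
Your proof is correct, and it takes a self-contained, genuinely different route from the one the paper relies on. The paper does not prove Theorem~\ref{rigid global behaviour of convex functions} directly: it cites \cite[Lemma 4.2]{Azagra2013} and \cite[Theorem 1.11]{AzagraMudarra2}, and then observes in Section~3 that the theorem also follows from the separable Hilbert result (Theorem~\ref{rigid global behaviour of convex functions in Hilbert space}) because on $\R^n$ directional coercivity of a convex function already implies coercivity. That infinite-dimensional route is established with a different key device: one takes the sublevel set $C=\{\psi\le 1\}$, writes $X\setminus C$ as a union of open halfspaces, uses separability to extract a countable covering subfamily $\{\xi_n\}$, and builds the perturbing functional as an absolutely convergent weighted series $\xi=\sum_n 2^{-n-1}\xi_n/\max\{1,\|\xi_n\|\}$, after which directional coercivity of $\psi-\xi$ is checked by cases on the sign of $\langle\xi,v\rangle$. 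Your argument instead exploits finite-dimensionality head-on: since every subgradient of $c_f$ lies in $\partial(c_f)_\infty(0)$ and the differences of subgradients span $X_f$, the convex set $\partial(c_f)_\infty(0)$ has nonempty interior in $X_f$, and any $\ell$ in that interior satisfies $(c_f)_\infty(v)\ge\langle\ell,v\rangle+\varepsilon|v|$, giving coercivity of $c_f-\ell$. This is shorter and cleaner on $\R^n$, but it is precisely the step that fails in infinite dimensions, where $\partial(c_f)_\infty(0)$ can have empty interior; that is why the paper needs the separating-hyperplane machinery for Theorems~\ref{rigid global behaviour of convex functions in Hilbert space} and~\ref{rigid global behaviour of convex functions in separable Banach spaces}, and even that only works for separable spaces, as Example~\ref{not true for nonseparable Hilbert spaces} shows. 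One small point worth making explicit in your write-up: the passage from $(c_f-\ell)_\infty(v)\ge\varepsilon|v|$ for all $v\ne 0$ to coercivity of $c_f-\ell$ uses the standard fact that a finite convex function on $\R^m$ whose recession function is positive off the origin has bounded sublevel sets (equivalently, its Fenchel conjugate is finite on a neighborhood of $0$); it is well known but deserves a sentence or a citation, since it is the only spot where the recession-function inequality is converted into actual coercivity.
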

The above characterizations of $X_f$ and $v_f$ do not appear in the statement of \cite[Theorem 1.11]{AzagraMudarra2}, but they are implicit in its proof.

Let us now examine the question as to what extent this result can be generalized for functions $f$ defined on Banach spaces. The first thing we must observe is that, even in the case of a separable Hilbert space, a strict analogue of Theorem \ref{rigid global behaviour of convex functions} is no longer true in infinite dimensions, because there exist continuous convex functions which attain its minima at a single point (in particular their sets of minimizers do not contain half-lines), and yet are not coercive. For instance, in the separable Hilbert space $X=\ell_2$, the function $\varphi(x)=\sum_{n=1}^{\infty}|x_n|^2/2^{n}$ has this property. However, such functions are
{\em directionally coercive}, in the following sense.

\begin{defn}\label{defn essential directional coercivity}
{\em We will say that a function $f$ defined on a Banach space $(X, \|\cdot\| )$ is {\em directionally coercive} provided that for every $x\in X$ and every $v\in X\setminus\{0\}$ we have that
$$
\lim_{t\to\infty}f(x+tv)=\infty.
$$
We will say that $\varphi:X\to\R$ is {\em essentially directionally coercive} provided that there exists $\ell\in X^{*}$ such that the function $\varphi-\ell$ is directionally coercive.}
\end{defn}

It is then natural to wonder whether an analogue of Theorem \ref{rigid global behaviour of convex functions} should hold true at least for separable infinite-dimensional Hilbert spaces if we replace the notion of essential coerciveness with that of essential directional coerciveness. The first main result of this note tells us that this is indeed true.

\begin{thm}\label{rigid global behaviour of convex functions in Hilbert space} Let $Z$ be a separable Hilbert space. For every continuous convex function $f:Z\to\R$ there exist a unique closed linear subspace $X_f$ of $Z$, a unique vector $v_{f}\in X_{f}^{\perp}$, and a unique essentially directionally coercive convex function $c_{f}:X_f\to\R$ such that $f$ can be written in the form
$$
f(z)=c_{f}(P_{X_{f}}(z)) +\langle v_{f}, z\rangle \textrm{ for all } z\in Z.
$$
The subspace $X_{f}$ coincides with $\overline{\textrm{span}}\{u-w : u\in\partial f(z), w\in\partial f(y), z, y\in Z\}$, and the vector $v_f$ coincides with $Q_{X_f}(\xi_0)$ for any $\xi_0\in\partial f(z_0)$, $z_0\in Z$, where $Q_{X_f}=I-P_{X_f}$ is the orthogonal projection of $Z$ onto $X_{f}^{\perp}$.
\end{thm}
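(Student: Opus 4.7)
The plan is to mirror the structure of Theorem~\ref{rigid global behaviour of convex functions}. I begin by defining $X_f := \overline{\textrm{span}}\{u-w : u\in\partial f(z),\, w\in\partial f(y),\, z,y\in Z\}$, a closed subspace of $Z$. Since every difference of two subgradients of $f$ lies in $X_f$, the component $Q_{X_f}(\xi)$ takes a common value on the range of $\partial f$; call this value $v_f\in X_f^\perp$. Set $g(z):=f(z)-\langle v_f,z\rangle$; each $\eta\in\partial g(z)=\partial f(z)-v_f$ equals $P_{X_f}(\xi)\in X_f$ for some $\xi\in\partial f(z)$, so $\langle\eta,w\rangle=0$ for every $w\in X_f^\perp$. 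The one-variable convex function $s\mapsto g(z+sw)$ then has vanishing one-sided derivatives at every $s$ (each being an extremum of $\langle\eta,w\rangle$ over $\eta\in\partial g(z+sw)$), and is therefore constant. Consequently $g$ factors through $P_{X_f}$, and putting $c_f(x):=f(x)$ for $x\in X_f$ (noting $\langle v_f,x\rangle=0$ there) yields the desired decomposition $f(z)=c_f(P_{X_f}(z))+\langle v_f,z\rangle$.

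The real work is proving that $c_f$ is essentially directionally coercive on the separable Hilbert space $X_f$. For a continuous convex function $h$ on a Hilbert space the recession function $h_\infty(v):=\lim_{t\to\infty}(h(x+tv)-h(x))/t$ is well-defined and independent of $x$, and directional coerciveness at $v\neq 0$ is equivalent to $h_\infty(v)>0$. Thus I need $\ell\in X_f$ with $\psi(v):=(c_f)_\infty(v)>\langle\ell,v\rangle$ for every $v\in X_f\setminus\{0\}$. The positively homogeneous closed convex function $\psi$ is the support function of $C:=\partial\psi(0)$, a nonempty closed convex subset of $X_f$. A direct verification shows $C$ contains every subgradient of $c_f$, so for any $\ell\in C$ the set $C-\ell$ contains all differences $\xi-\xi'$ of subgradients of $f$; hence $\overline{\textrm{span}}(C-\ell)=X_f$. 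Here is where separability enters: by a classical result of Borwein and Lewis, every nonempty convex subset of a separable Banach space has nonempty quasi-relative interior. Picking any $\ell\in\textrm{qri}(C)$, the set $\overline{\textrm{cone}}(C-\ell)$ is a closed subspace of $X_f$; since it contains the closed linear span $X_f$, it must equal $X_f$. This translates into $\sup_{\eta\in C}\langle\eta-\ell,v\rangle>0$ for every $v\in X_f\setminus\{0\}$, i.e., $\psi(v)>\langle\ell,v\rangle$, as required. I expect this step---passing from the density of subgradient differences in $X_f$ to the concrete existence of a witness $\ell$---to be the main obstacle, and I do not see how to avoid an appeal to some separability-based existence result such as quasi-relative interior.

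Uniqueness is then routine. Given another admissible decomposition $f=\tilde c\circ P_{\tilde X}+\langle\tilde v,\cdot\rangle$ with $\tilde v\in\tilde X^\perp$ and $\tilde c$ essentially directionally coercive, every $\xi\in\partial f(z)$ equals $\tilde v$ plus an element of $\tilde X$, so all subgradient differences of $f$ lie in $\tilde X$ and $X_f\subseteq\tilde X$. Conversely, any nonzero $v\in\tilde X\cap X_f^\perp$ would make $f$ affine in direction $v$ (by the first paragraph) and therefore force $\tilde c$ to be affine in direction $v$; directional coerciveness of $\tilde c-\tilde\ell$ in \emph{both} directions $\pm v$ would then require the constant slope $\langle v_f-\tilde\ell,v\rangle$ to be simultaneously positive and negative, which is impossible. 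So $\tilde X\cap X_f^\perp=\{0\}$ and hence $\tilde X=X_f$, after which $\tilde v=Q_{X_f}(\xi_0)=v_f$ for any subgradient $\xi_0$, and $\tilde c=c_f$ follows from the decomposition formula.
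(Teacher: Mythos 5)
Your proof is correct, and it reaches the same decomposition and the same characterization of $X_f$ and $v_f$ as the paper, but the route to the crucial separability-dependent step (essential directional coercivity of $c_f$) is genuinely different.

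The paper first proves the Banach-space decomposition (Theorem~\ref{rigid global behaviour of convex functions in Banach spaces}) by isolating the lineality subspace $Y_f=\{v: f(z_0+tv)-f(z_0)-\langle\xi_0,tv\rangle=0\ \forall t\}$ and passing to the quotient $Z/Y_f$; you instead work directly in the Hilbert setting, define $X_f$ as the closed span of subgradient differences, and show $g:=f-\langle v_f,\cdot\rangle$ is constant on every line in an $X_f^\perp$-direction because all subgradients of $g$ lie in $X_f$ (so the one-sided derivatives of $s\mapsto g(z+sw)$ vanish). These two viewpoints are dual to one another and both are fine. The real divergence is in proving that $c_f$ is essentially directionally coercive. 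The paper's Proposition~\ref{for separable not constant on any line implies ed coercive} takes the sublevel set $C=\{\psi\le 1\}$ of the normalized function, uses separability (Lindel\"of) to write $X\setminus C$ as a \emph{countable} union of open halfspaces with normals $\xi_n$, then builds the witness $\xi=\sum_n 2^{-(n+1)}\xi_n/\max\{1,\|\xi_n\|\}$ and checks directional coercivity of $\psi-\xi$ by a three-case estimate. You instead pass to the recession function $\psi=(c_f)_\infty$, identify it as the support function of $C=\partial\psi(0)$ (which contains every subgradient of $c_f$, hence $\overline{\mathrm{span}}(C-\ell)=X_f$ for any $\ell\in C$), and invoke the Borwein--Lewis theorem that nonempty convex subsets of separable Banach spaces have nonempty quasi-relative interior: any $\ell\in\mathrm{qri}(C)$ makes $\overline{\mathrm{cone}}(C-\ell)$ a closed subspace, which must then be all of $X_f$, forcing $\psi(v)>\langle\ell,v\rangle$ for $v\ne 0$. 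The paper's argument is elementary and self-contained; yours is shorter and more conceptual (it identifies the recession function as the right invariant) but relies on quasi-relative interiors as a black box. One small slip in your wording: it is $\mathrm{span}(C-\ell)$, not $C-\ell$ itself, that contains the subgradient differences $\xi-\xi'=(\xi-\ell)-(\xi'-\ell)$; this does not affect the argument since you only use the closed span.
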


If we want to extend this result to the class of arbitrary Banach spaces, we must take into account that on every Banach space $X$ which does not admit any Hilbertian renorming there always exist closed subspaces $Y\subset X$ which are not complemented in $X$, that is, there is no continuous linear projection $P:X\to Y$; see \cite{LindenstraussTzafriri}. This fact precludes a more or less literal generalization of Theorem \ref{rigid global behaviour of convex functions in Hilbert space}. Nonetheless, if we are willing to consider $X_f$ as a quotient space of $Z$ instead of a subspace of $Z$, then we can give a weak version of Theorem \ref{rigid global behaviour of convex functions in Hilbert space} for the class of all Banach spaces.

\begin{thm}\label{rigid global behaviour of convex functions in Banach spaces} Let $Z$ be a  Banach space. For every continuous convex function $f:Z\to\R$ there exists a unique closed linear subspace $Y_f$ of $Z$ such that, for the quotient space $X_f :=Z/Y_{f}$ and the natural projection $\pi=\pi_{f}:Z\to X_f$, the function $f$ can be written in the form
$$
f(z)=c(\pi(z)) +\ell(z) \textrm{ for all } z\in Z,
$$
where $c:X_f\to [a, \infty)$ is a convex function which is not constant on any line, $a\in c(X_f)$, and $\ell\in Z^{*}$.

The subspace $Y_{f}$ coincides with $\{v\in Z: f(z_0+tv)-f(z_0)-\langle\xi_0, tv\rangle=0 \textrm{ for all } t\in\R\}$, where $z_0$ is any point in $Z$ and $\xi_0$ is any linear form in $\partial f(z_0)$. 
\end{thm}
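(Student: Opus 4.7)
The plan is to take $Y_f$ to be the subspace of directions along which $f$ is globally affine through \emph{every} basepoint, to engineer $\ell$ as an arbitrary subgradient of $f$ at a single chosen point, and to show that $f-\ell$ descends to a function on the quotient $Z/Y_f$ with the required properties.

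First I would fix any $z_0\in Z$ and any $\xi_0\in\partial f(z_0)$ (nonempty because $f$ is continuous convex), and set $\ell:=\xi_0\in Z^{*}$ and $g:=f-\ell$. Since $0\in\partial g(z_0)$, the function $g$ attains its global minimum at $z_0$; write $a:=g(z_0)$ and $M:=\{z:g(z)=a\}$, a closed convex set. The subspace $Y_f$ defined in the theorem is then $\{v\in Z:z_0+\R v\subset M\}$: it is closed as an intersection over $t\in\Q$ of the closed sets $\{v:g(z_0+tv)=a\}$, and it is a linear subspace, because if $z_0+\R v_i\subset M$ for $i=1,2$, then convexity of $M$ makes the midpoint of $z_0+2\alpha v_1$ and $z_0+2\beta v_2$ lie in $M$, so $z_0+\alpha v_1+\beta v_2\in M$ for every $\alpha,\beta\in\R$.

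The central step is a \emph{promotion lemma}: if $f(z_0+tv)=f(z_0)+t\langle\xi_0,v\rangle$ for every $t\in\R$, then $f(z+tv)=f(z)+t\langle\xi_0,v\rangle$ for every $z\in Z$ and every $t\in\R$. I would prove it in two stages. Stage (i): for any $z\in Z$ and any $\xi\in\partial f(z)$, the subgradient inequality gives $f(z_0+tv)\geq f(z)+\langle\xi,z_0-z\rangle+t\langle\xi,v\rangle$; combined with $f(z_0+tv)=f(z_0)+t\langle\xi_0,v\rangle$, this yields $t(\langle\xi_0,v\rangle-\langle\xi,v\rangle)\geq f(z)-f(z_0)+\langle\xi,z_0-z\rangle$ for every $t\in\R$, and an affine function of $t$ bounded below on $\R$ has zero slope, so $\langle\xi,v\rangle=\langle\xi_0,v\rangle$. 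Stage (ii): the one-variable convex function $h(t):=f(z+tv)$ has one-sided derivatives $h'_+(t_0)=\sup_{\xi\in\partial f(z+t_0v)}\langle\xi,v\rangle$ and $h'_-(t_0)=\inf_{\xi\in\partial f(z+t_0v)}\langle\xi,v\rangle$, both equal to $\langle\xi_0,v\rangle$ by Stage (i); a convex function on $\R$ with constant derivative is affine, so $h(t)=h(0)+t\langle\xi_0,v\rangle$. This simultaneously yields the intrinsic description of $Y_f$ as $\{v:f\text{ is affine in direction }v\text{ through every point}\}$, showing it is independent of $(z_0,\xi_0)$, and it implies that $g=f-\ell$ is constant on every coset of $Y_f$.

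Hence $g$ descends to a convex continuous $c:X_f\to\R$ with $g=c\circ\pi$ (continuity follows because $\pi$ is an open quotient map), and $c\geq c(\pi(z_0))=a$ is the attained lower bound. That $c$ is not constant on any line is the contrapositive of the promotion lemma: if $c$ were constant along $\pi(z)+\R\pi(w)$ with $\pi(w)\neq 0$, then $g$ would be constant along $z+\R w$, so $f$ would be affine there, and the promotion lemma applied with basepoint $z$ forces $f$ affine along $z_0+\R w$ as well, giving $w\in Y_f$---a contradiction. Uniqueness of $Y_f$ is the same argument in reverse: any second decomposition $f=c'\circ\pi'+\ell'$ meeting the theorem's conditions makes $\ker\pi'$ coincide with $\{v:f\text{ is affine in direction }v\text{ through every point}\}$, since $v\in\ker\pi'$ yields such affineness directly, while conversely global affineness of $f$ in direction $v$ forces $c'$ to be affine along $\pi'(z)+\R\pi'(v)$ for every $z$, and $c'\geq a'$ together with ``$c'$ not constant on any line'' leaves $\pi'(v)=0$ as the only option. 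The main obstacle I anticipate is precisely the promotion lemma: passing from affineness of $f$ along the single line through $z_0$ to affineness along every parallel line. The identity $f'(z;v)=\sup_{\xi\in\partial f(z)}\langle\xi,v\rangle$ is the bridge that turns the basepoint-tied condition into a statement about the full subdifferential $\partial f(z)$ at every $z$, hence into global affineness.
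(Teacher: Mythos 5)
Your proof is correct, and it reaches the same objects ($Y_f$ as the set of globally affine directions, $\ell=\xi_0$ for a chosen subgradient, the quotient function $c$) as the paper, but via a genuinely different key lemma. The paper's route to showing that $Y(f,z,\xi)$ does not depend on $(z,\xi)$ is a direct contradiction argument: after normalizing so that $z_2=0$, $\xi_2=0$, $f(0)=0$, one supposes $f$ is not constant on the parallel line $z_1+\R v$, picks $\eta_2\in\partial f(z_1+t_2 v)$ with $\langle\eta_2,v\rangle>0$, and derives $0=f(tv)\geq f(z_1+t_2 v)+\langle\eta_2,tv-z_1-t_2v\rangle\to\infty$. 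Your ``promotion lemma'' replaces this with two clean steps: Stage (i) shows from the subgradient inequality and bounded affine functions that $\langle\xi,v\rangle=\langle\xi_0,v\rangle$ for \emph{every} $\xi\in\partial f(z)$ and every $z$, and Stage (ii) converts this into global affineness along parallel lines by invoking the identity $f'(z;v)=\max_{\xi\in\partial f(z)}\langle\xi,v\rangle$ for the one-sided derivatives of $t\mapsto f(z+tv)$. In effect your Stage (i) is the paper's Lemma~(4), and the directional-derivative formula then subsumes the contradiction argument used for the paper's Lemma~(2). What you buy with this route is a more transparent mechanism (constancy of the directional derivative in direction $v$) and, as a side benefit, a uniqueness argument that falls out immediately, which the paper leaves implicit. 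What it costs is a reliance on the standard but nontrivial fact that the one-sided directional derivative equals the support function of the subdifferential (available since $f$ is continuous convex, so $\partial f(x)$ is weak$^*$-compact), whereas the paper's argument uses only the raw subgradient inequality. Both approaches are sound; yours is a legitimate alternative.
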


If $X$ is separable we can make this result stronger.

\begin{thm}\label{rigid global behaviour of convex functions in separable Banach spaces} Let $Z$ be a  separable Banach space. For every continuous convex function $f:Z\to\R$ there exists a unique closed linear subspace $Y_f$ of $Z$ such that, for the quotient space $X_f :=Z/Y_{f}$ and the natural projection $\pi=\pi_{f}:Z\to X_f$, the function $f$ can be written in the form
$$
f(z)=c(\pi(z)) +\ell(z) \textrm{ for all } z\in Z,
$$
where $c:X_f\to\R$ is a convex function which is essentially directionally coercive, and $\ell\in Z^{*}$.

The subspace $Y_{f}$ coincides with $\{v\in Z: f(z_0+tv)-f(z_0)-\langle\xi_0, tv\rangle=0 \textrm{ for all } t\in\R\}$, where $z_0$ is any point in $Z$ and $\xi_0$ is any linear form in $\partial f(z_0)$. 
\end{thm}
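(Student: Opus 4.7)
The plan is to bootstrap from the Banach-space version Theorem~\ref{rigid global behaviour of convex functions in Banach spaces}, which applies to arbitrary $Z$, and then use the separability hypothesis to strengthen the conclusion. Applying that theorem to $f$ yields the decomposition $f=c_0\circ\pi+\ell_0$, where $Y_f$ is the unique closed subspace with the characterization stated (so its uniqueness and explicit description are inherited), $\pi\colon Z\to X_f:=Z/Y_f$ is the quotient projection, $\ell_0\in Z^{*}$, and $c_0\colon X_f\to[a,\infty)$ is convex, continuous, and not constant on any line. Since $Z$ is separable, so is the quotient $X_f$. What remains is to show that $c_0$ itself is essentially directionally coercive; then $c:=c_0$ and $\ell:=\ell_0$ give the required decomposition.

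The key object is the recession function $c_0^{\infty}(v):=\lim_{t\to\infty}c_0(tv)/t\in[0,\infty]$, which is convex, positively homogeneous, and lower semicontinuous. A short one-variable computation (a convex function $g\colon\R\to\R$ that is bounded below and has both recession slopes $\lim_{t\to\infty}g(t)/t$ and $\lim_{t\to\infty}g(-t)/t$ equal to zero must be constant) converts ``$c_0$ is not constant on any line'' into $c_0^{\infty}(v)+c_0^{\infty}(-v)>0$ for every $v\in X_f\setminus\{0\}$. To obtain essential directional coerciveness it then suffices to produce $\tilde\ell\in X_f^{*}$ with $\tilde\ell(v)<c_0^{\infty}(v)$ for every $v\neq 0$, for then $(c_0-\tilde\ell)^{\infty}(v)>0$ forces $(c_0-\tilde\ell)(x+tv)\to\infty$ as $t\to\infty$ for every $x\in X_f$ and every $v\neq 0$.

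To construct $\tilde\ell$ using separability, fix a countable dense sequence $\{v_n\}$ in the unit sphere of $X_f$. For each $n$, the inequality $c_0^{\infty}(v_n)+c_0^{\infty}(-v_n)>0$ together with a Hahn--Banach support argument applied to the sublinear functional $c_0^{\infty}$ yields some $\xi_n\in\partial c_0^{\infty}(0)=\{\xi\in X_f^{*}:\xi\le c_0^{\infty}\}$ with $\xi_n(v_n)<c_0^{\infty}(v_n)$: if $c_0^{\infty}(-v_n)>0$ one chooses $\xi_n$ to be a support of $c_0^{\infty}$ at $-v_n$, so that $\xi_n(v_n)=-c_0^{\infty}(-v_n)<0\le c_0^{\infty}(v_n)$; otherwise $c_0^{\infty}(v_n)>0$ and one takes $\xi_n=\frac{1}{2}\eta_n$ with $\eta_n$ a support at $v_n$. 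Choose positive weights $w_n$ with $\sum_n w_n=1$ and $\sum_n w_n\|\xi_n\|<\infty$, and set $\tilde\ell:=\sum_n w_n\xi_n$. The series converges absolutely in $X_f^{*}$, the limit lies in the convex set $\partial c_0^{\infty}(0)$, and for every $m$ the $m$-th summand forces $\tilde\ell(v_m)\le w_m\xi_m(v_m)+(1-w_m)c_0^{\infty}(v_m)<c_0^{\infty}(v_m)$.

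The main obstacle is to extend the strict inequality from $\{v_n\}$ to every $v\in X_f\setminus\{0\}$: the closed convex cone $E:=\{v\in X_f:\tilde\ell(v)=c_0^{\infty}(v)\}$ (closed by lower semicontinuity of $c_0^{\infty}$, a cone by positive homogeneity) need not be $\{0\}$ just because it is disjoint from $\{v_n\}$. To address this I would refine the construction inductively, enforcing quantitative margins $c_0^{\infty}(v_n)-\tilde\ell(v_n)\ge\delta_n>0$ chosen so that the partial sums form a Cauchy sequence in $X_f^{*}$ and the margins propagate under the later perturbations, and then combining density with the positive homogeneity of $c_0^{\infty}$ to deduce $E=\{0\}$; this parallels the finite-dimensional argument of Theorem~\ref{rigid global behaviour of convex functions}, where the decisive step is selecting $\tilde\ell$ in the relative interior of $\partial c_0^{\infty}(0)$. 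The counterexample $\ell_2(\Gamma)$ with $\Gamma$ uncountable mentioned in the introduction indicates that separability is essential precisely at this step.
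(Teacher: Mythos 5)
Your proposal takes a genuinely different route from the paper's, via the recession function $c_0^{\infty}$, and the calculations up through the construction of $\tilde\ell$ are essentially sound (with minor caveats about $c_0^{\infty}$ possibly taking the value $+\infty$ and subgradients of the sublinear functional $c_0^{\infty}$ not being attained at every point, both of which can be worked around). However, the obstacle you flag at the end is a genuine gap, not a detail. Having $\tilde\ell(v_n)<c_0^{\infty}(v_n)$ on a dense subset $\{v_n\}$ of the unit sphere gives you essentially no control off that set: the exceptional set $E=\{v:\tilde\ell(v)=c_0^{\infty}(v)\}$ is indeed a closed convex cone, but a single ray through $0$ is already a closed convex cone that can miss every $v_n$, and lower semicontinuity of $c_0^{\infty}$ goes the wrong way (it allows $c_0^{\infty}$ to \emph{drop} at a limit point, so margins $\delta_n\to 0$ on the dense set need not survive in the limit). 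The suggested ``inductive refinement with quantitative margins'' is the whole ballgame, and as written it is only a plan; nothing in the proposal explains why such margins could be made uniform on sphere-neighborhoods or how the perturbations would propagate without destroying earlier margins. As it stands, the argument does not establish that $c_0-\tilde\ell$ is directionally coercive.

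The paper sidesteps this density pitfall entirely. After normalizing to $\psi\ge 0=\psi(0)$, it looks at the sublevel set $C=\{\psi\le 1\}$ and writes $X\setminus C$ as a union of open half-spaces $H_x^{+}$ coming from subgradients of $\psi$ at boundary points of $C$; separability (Lindel\"of) extracts a \emph{countable subcover} $X\setminus C=\bigcup_n H_{x_n}^{+}$. The crucial point is that this countable family covers the complement of $C$ \emph{exactly}, not merely densely, so for every $v\neq 0$ the escaping ray $\{tv\}$ (or $\{-tv\}$) lands in some $H_{x_n}^{+}$, which is precisely what Lemma \ref{the xin separate points} uses to prove that $\{\xi_n\}$ separates points of $X$. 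With that in hand, the weighted sum $\xi=\sum_n 2^{-(n+1)}\xi_n/\max\{1,\|\xi_n\|\}$ is shown to make $\psi-\xi$ directionally coercive by a three-case analysis on the sign of $\langle\xi,v\rangle$. If you want to rescue your recession-function approach, the analogous move would be to run Lindel\"of on the open cone $\{v:c_0^{\infty}(v)>1\}$ (rather than on a dense sequence in the sphere) so that the chosen $\xi_n$ genuinely witness positivity of $c_0^{\infty}$ along \emph{every} non-recession direction, and then to argue as in the paper's Cases 1--3 rather than by trying to bound $E$.
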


The following example shows that, even in the Hilbertian setting, if $Z$ is not separable then it is not generally possible to obtain a decomposition $f(z)=c_{f}(\pi(z)) +\ell(z)$ with $c_f$ essentially directionally coercive (as opposed to $c_f$ just not being constant on any line). This means that the statement of Theorem \ref{rigid global behaviour of convex functions in separable Banach spaces} would be generally false if we let $Z$ be a nonseparable Banach space, and consequently that Theorem \ref{rigid global behaviour of convex functions in Banach spaces} is the best result of its kind that one can obtain in the nonseparable setting.

\begin{ex}\label{not true for nonseparable Hilbert spaces}
{\em Let $\Gamma$ be an uncountable set, and set $Z=\ell_2(\Gamma)$, the space of all functions $x:\Gamma\to\R$ such that $\sum_{\gamma\in\Gamma}|x(\gamma)|<\infty$. Recall that this summability condition implies that $\textrm{supp}(x):=\{\gamma\in\Gamma : x(\gamma)\neq 0\}$ is countable for every $x\in\ell_2(\Gamma)$, and that $\ell_2(\Gamma)$ becomes a nonseparable Hilbert space when endowed with the norm
$$
\|x\|=\left(\sum_{\gamma\in\Gamma}|x(\gamma)|^2\right)^{1/2},
$$
and the associated inner product
$$
\langle x, y\rangle=\sum_{\gamma\in\Gamma}x(\gamma) y(\gamma).
$$
As is customary we will also denote $x=(x_{\gamma})_{\gamma\in\Gamma}$, where $x_{\gamma}:=x(\gamma)$ for every $\gamma\in\Gamma$.

We are about to construct a continuous convex function $f:Z\to\R$ such that, when we apply Theorem \ref{rigid global behaviour of convex functions in Banach spaces} with it, we get $X_f=Z$, and yet there exists no linear form $\ell:Z\to\R$ such that $f-\ell$ is directionally coercive. Let us define $\theta:\R\to [0, \infty)$ by
\begin{equation}\label{definition of theta}
\theta(t)=
\begin{cases}
0 & \textrm{ if } t\leq 0 \\
t^2 & \textrm{ if } t\geq 0,
\end{cases}
\end{equation}
and set $f:Z\to\R$,
$$
f(x):=\sum_{\gamma\in\Gamma} \theta(x_{\gamma}).
$$
It is easy to see that $f$ is a continuous convex function such that $0\leq f(x)\leq \|x\|^2$ for every $x\in Z$. We claim that $f$ is not constant on any line $L=\{x+tv : t\in\R\}$, with $x, v\in Z$, $v\neq 0$. Indeed, since $v\neq 0$ there exists some $\beta\in\Gamma$ such that $v_{\beta}\neq 0$. If $v_{\beta}>0$ then, for all $t>-x_{\beta}/v_{\beta}$ we have
$$
f(x+tv)\geq \theta(x_{\beta}+t v_{\beta})=(x_{\beta}+t v_{\beta})^2,$$
and it follows that $\lim_{t\to+\infty}f(x+tv)=+\infty$. Similarly, if $v_{\beta}<0$ then we get $\lim_{t\to-\infty}f(x+tv)=+\infty$. In either case, $f$ cannot be constant on $L$. 

Since $0\in\partial f(0)$, this also shows, thanks to the characterization of $Y_f$ provided by Theorem \ref{rigid global behaviour of convex functions in Banach spaces}, that $Y_f=\{0\}$, which implies that $X_f=Z$ and $\pi_f$ is the identity.

Let us finally see that there exists no linear form $\ell:Z\to\R$ such that $f-\ell$ is directionally coercive. Suppose there exists such $\ell$. Then $\ell(x)=\langle v, x\rangle$ for some $v\in \ell_2(\Gamma)$. Since $\Gamma$ is uncountable and $\textrm{supp}(v)=\{\gamma\in\Gamma : v_{\gamma}\neq 0\}$ is countable, there exists some $\alpha\in\Gamma$ such that $v_{\alpha}=0$. Let $e$ denote the point of $Z$ such that $e_{\alpha}=1$ and $e_{\gamma}=0$ for all $\gamma\neq\alpha$. Because $f-\ell$ is supposed to be directionally coercive we should have $\lim_{t\to\infty}f(-te)-\ell(-te)=\infty$. However, we have
$$
f(-te)-\ell(-te)=f(-te)+t\langle e, v\rangle=f(-te)+tv_{\alpha}=f(-te)=\theta(-t)=0
$$
for all $t>0$. \qed
}
\end{ex}

\bigskip

\section{Proofs of the main results}

Let us now give the proofs of Theorems \ref{rigid global behaviour of convex functions in Hilbert space}, \ref{rigid global behaviour of convex functions in Banach spaces} and \ref{rigid global behaviour of convex functions in separable Banach spaces}.

\begin{proof}[Proof of Theorem \ref{rigid global behaviour of convex functions in Banach spaces}] Let $Z$ be a Banach space, and $f:Z\to\R$ a continuous convex function. Let us define, for every point $z_0\in Z$ and every $\xi_0\in\partial f(z_0)$, the set
$$
Y(f, z_0, \xi_0)=\{v\in Z : f(z_0+tv)-f(z_0)-\langle\xi_0, tv\rangle=0 \textrm{ for all } t\in\R\}.
$$
\begin{lem}\label{Yf is a subspace not depending on z and xi}
For every $z_1, z_2\in Z$, $\xi_1\in\partial f(z_1), \xi_2\in\partial f(z_2)$, we have the following:
\begin{enumerate}
\item  $Y(f, z_1, \xi_1)$ is a closed linear subspace of $Z$;
\item $Y(f, z_1, \xi_1)=Y(f, z_2, \xi_2)$ (that is, $Y(f, z, \xi)$ does not depend on $z, \xi$);
\item $f(z_1+v)=f(z_1)+\langle \xi_1, v\rangle$ for all $v\in Y(f, z_1, \xi_1)$;
\item $\langle \xi_2-\xi_1, v\rangle=0$ for all $v\in Y(f, z_1, \xi_1)$.
\end{enumerate} 
\end{lem}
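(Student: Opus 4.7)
The plan is to set $g(v):=f(z_1+v)-f(z_1)-\langle\xi_1,v\rangle$: this is continuous, convex, satisfies $g\geq 0$ (by the subgradient inequality) and $g(0)=0$, and $v\in Y(f,z_1,\xi_1)$ precisely when $g(tv)=0$ for every $t\in\R$. Topological closedness of $Y(f,z_1,\xi_1)$ and stability under scalar multiplication are immediate from continuity of $g$ and reparametrization of $t$. Stability under addition follows from convexity and nonnegativity of $g$: for $v_1,v_2\in Y$,
\[
g(tv_1+tv_2)\leq \tfrac12 g(2tv_1)+\tfrac12 g(2tv_2)=0,
\]
so $g(tv_1+tv_2)=0$ for all $t$. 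This proves (1), while (3) is just the case $t=1$ of the definition.

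The crucial observation for (2) and (4) is that when $v\in Y(f,z_1,\xi_1)$, the functional $\xi_1$ is in fact a subgradient of $f$ at every point of the line $z_1+\R v$. This is a standard consequence of the subgradient inequality: for any $s\in\R$ and any $w\in Z$,
\[
f(w)\geq f(z_1)+\langle \xi_1, w-z_1\rangle = f(z_1+sv)+\langle \xi_1, w-(z_1+sv)\rangle,
\]
using $f(z_1+sv)=f(z_1)+s\langle \xi_1,v\rangle$. With this in hand, (4) follows from monotonicity of the subdifferential: for $\xi_2\in\partial f(z_2)$ and arbitrary $s\in\R$,
\[
\langle \xi_2-\xi_1,\, z_2-(z_1+sv)\rangle\geq 0,
\]
and letting $s$ range over $\R$ forces $\langle \xi_2-\xi_1,v\rangle=0$.

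For (2), by symmetry it suffices to prove $Y(f,z_1,\xi_1)\subseteq Y(f,z_2,\xi_2)$. Fix $v$ in the former and set $\alpha:=\langle \xi_1,v\rangle=\langle \xi_2,v\rangle$. Consider the convex function $u:\R\to\R$, $u(t):=f(z_2+tv)$. At each $t$, choose any $\xi\in\partial f(z_2+tv)$; pairing $\xi$ against $\xi_1\in\partial f(z_1+sv)$ via the same monotonicity trick (now letting $s$ vary) yields $\langle \xi,v\rangle=\alpha$, whence $\alpha\in\partial u(t)$. A convex function on $\R$ whose subdifferential contains a fixed constant at every point is affine with that slope, so $u(t)=f(z_2)+\alpha t$ for all $t$, i.e., $v\in Y(f,z_2,\xi_2)$.

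The main obstacle is (2): transferring the affine-on-a-line property from the reference line $z_1+\R v$ to a parallel line $z_2+\R v$ whose base point need not lie on the reference line. The two-step scheme---first use subdifferential monotonicity along $z_1+\R v$ to pin the slope in the direction $v$ to the constant $\alpha$ at every point of $Z$, then exploit that a one-variable convex function with a constant subgradient is affine---is what makes the argument work.
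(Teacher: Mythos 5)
Your proof is correct, and for parts (2) and (4) it takes a genuinely different route from the paper. Parts (1) and (3) are handled essentially as in the paper (the substitution $g(v)=f(z_1+v)-f(z_1)-\langle\xi_1,v\rangle$ is just a convenient repackaging of the same convexity argument). For (2) and (4), the paper normalizes to $z_2=0$, $\xi_2=0$, $f(0)=0$, and then argues by contradiction: if $f$ were not constant on $L(z_1,v)$, it would blow up in one direction, and a subgradient $\eta_2$ at a far-out point of that line with $\langle\eta_2,v\rangle>0$, paired against the reference line $\{tv\}$ where $f\equiv 0$, forces $0\geq$ something tending to $+\infty$. Part (4) in the paper is then a separate calculation combining (3) with the subgradient inequality and letting $t\to\pm\infty$. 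Your argument instead isolates the structural fact that $\xi_1$ remains a subgradient of $f$ at \emph{every} point of the line $z_1+\R v$ once $v\in Y(f,z_1,\xi_1)$, deduces (4) immediately from monotonicity of the subdifferential (letting $s\to\pm\infty$), and then reuses the same monotonicity step against an arbitrary $\xi\in\partial f(z_2+tv)$ to pin the slope of $u(t)=f(z_2+tv)$ to the constant $\alpha=\langle\xi_1,v\rangle$, concluding that $u$ is affine. Both approaches work; the paper's is more self-contained (no appeal to one-dimensional subdifferential calculus and the affine normalization does the work of your constant-slope step), while yours makes the underlying mechanism---propagation of the subgradient along the degenerate direction plus cyclic monotonicity---more explicit, and proves (4) before (2) rather than after.
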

\begin{proof}
Let us denote $Y= Y(f, z_0, \xi_0)$ for convenience. From the definition it is clear that $\lambda v\in Y$ for all $v\in Y$, $\lambda\in\R$. So, in order to check that $Y$ is a subspace it is enough to see that, given $v_1, v_2\in Y$, we have that $v_1+v_2\in Y$. We may write 
$
t(v_1+v_2)=\frac{1}{2} t 2v_1+\frac{1}{2}t 2v_2
$, and since $f$ is convex, $\xi_0\in\partial f(z_0)$ and $2v_1, 2v_2\in Y$, we have
\begin{eqnarray*}
& & 0\leq f(z_0+t(v_1+v_2))-f(z_0)-\langle \xi_0, t(v_1+v_2)\rangle \\
& & \leq \frac{1}{2}\left( f(z_0+t2v_1)-f(z_0)-\langle\xi_0, t2v_1\rangle\right)
+\frac{1}{2}\left( f(z_0+t2v_2)-f(z_0)-\langle\xi_0, t2v_2\rangle\right) \\
& & \leq 0+0=0,
\end{eqnarray*}
hence $f(z_0+t(v_1+v_2))-f(z_0)-\langle \xi_0, t(v_1+v_2)\rangle=0$ for all $t\in\R$, that is, $v_1+v_2\in Y$. This shows $(1)$ (the fact that $Y$ is closed is obvious by continuity of $f$ and $\xi_0$).

In order to see that $Y(f, z, \xi)$ does not depend on $z, \xi$, fix $z_1, z_2\in Z$, $\xi_1\in\partial f(z_1)$, $\xi_2\in\partial f(z_2)$, and let us check that  $Y(f, z_1, \xi_1)=Y(f, z_2, \xi_2)$. Up to an affine perturbation and a translation of variables, we may assume that $z_2=0, \xi_2=0$, and $f(0)=0$. So we have $f(tv)=0$ for all $v\in Y(f, 0, 0)$, $t\in\R$. Let us fix $v\in Y(f, 0, 0)$, and let us see that $f$ is constant on the line $L(z_1, v)=\{z_1+tv : t\in\R\}$. Suppose not: then either $\lim_{t\to\infty}f(z_1+tv)=\infty$ or $\lim_{t\to-\infty}f(z_1+tv)=\infty$. Assume for instance that $\lim_{t\to\infty}f(z_1+tv)=\infty$, and take $t_2>t_1>0$ large enough so that
$$
f(z_1+t_2v)>f(z_1+t_1v);
$$
then by convexity, for any $\eta_2\in\partial f(z_1+t_2v)$, we have that 
$$
\langle \eta_2, (t_2-t_1)v\rangle\geq f(z_1+t_2 v)-f(z_1+t_1 v)>0,
$$
which implies that 
$$
\langle \eta_2, v\rangle>0.
$$
Then, using again the convexity of $f$, we have, for $t>t_2$, 
$$
0=f(tv)\geq f(z_1+t_2 v)+\langle \eta_2, tv-z_1-t_2v\rangle=
f(z_1+t_2v)+\langle\eta_2, tv\rangle-\langle\eta_2, z_1 +t_2v\rangle\to\infty
$$
as $t\to\infty$, which is absurd. Therefore $f$ is constant on $L(z_1, v)$. This implies that $f(z_1+tv)-f(z_1)-\langle\xi_1, tv\rangle=0$ for all $t\in\R$. Thus $v\in Y(f, z_1, \xi_1)$. This argument shows that $Y(f, z_2,\xi_2)\subseteq Y(f, z_1, \xi_1)$. Similarly one can check that $Y(f, z_1, \xi_1)\subseteq Y(f, z_2, \xi_2)$. This proves $(2)$.

Property $(3)$ is obvious from the definition of $Y(f, z, \xi)$. To check $(4)$, note that since $\xi_2\in\partial f(z_2)$ we have 
$$
f(z_1+tv)\geq f(z_2)+\langle\xi_2, z_1+tv-z_2\rangle.
$$
By combining this with $(3)$ we get, for all $v\in Y(f, z_1, \xi_1)$,  that
$$
f(z_1)+\langle\xi_1, tv\rangle=f(z_1+tv)\geq f(z_2)+\langle\xi_2, z_1+tv-z_2\rangle,
$$
hence
$$
f(z_1)+\langle\xi_1-\xi_2, tv\rangle \geq f(z_2)+\langle\xi_2, z_1-z_2\rangle
$$
for all $t\in\R$, $v\in Y(f, z_1, \xi_1)$. If $\langle \xi_1-\xi_2, v\rangle >0$ this implies that
$$
-\infty=\lim_{t\to-\infty}f(z_1)+\langle\xi_1-\xi_2, tv\rangle\geq f(z_2)+\langle\xi_2, z_1-z_2\rangle,
$$
which is absurd. Similarly, if $\langle \xi_1-\xi_2, v\rangle <0$ then we get
$$
-\infty=\lim_{t\to+\infty}f(z_1)+\langle\xi_1-\xi_2, tv\rangle\geq f(z_2)+\langle\xi_2, z_1-z_2\rangle.
$$
Therefore we must have $\langle \xi_2-\xi_1, v\rangle =0$ for all $v\in Y(f, z_1, \xi_1)$.
\end{proof}

From now on we will denote by $Y_f$ any (hence all) of the subspaces $Y(f, z, \xi)$  appearing in the statement of Lemma  \ref{Yf is a subspace not depending on z and xi}.
Fix $z_0\in Z$, and $\xi_0\in\partial f(z_0)$, set
$$
\ell :=\xi_0,
$$
and define $\varphi:Z\to\R$ by
$$
\varphi(z):=f(z)-\ell(z).
$$
\begin{lem}
The function $\varphi$ satisfies $\varphi(z)=\varphi(z')$ for all $z, z'\in Z$ with $z-z'\in Y_f$. Hence the function
$$
\widehat{\varphi}: X_f:=Z/Y_f\to\R,  \,\,\, \,\,\, \widehat{\varphi}(z+Y_f):=\varphi(z)
$$
is well defined, and satisfies
$$
\varphi=\widehat{\varphi}\circ \pi, 
$$
where $\pi:Z\to Z/Y_f=X_f$ is the natural projection.
\end{lem}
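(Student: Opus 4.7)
The plan is to derive this lemma as an essentially immediate consequence of the four properties of $Y_f$ already established in Lemma 5. Fix $z,z'\in Z$ with $v:=z-z'\in Y_f$; the goal is to show $\varphi(z)=\varphi(z')$, which after unwinding $\varphi=f-\ell$ and $\ell=\xi_0$ amounts to proving the identity
$$
f(z)-f(z')=\langle \xi_0, z-z'\rangle.
$$

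The natural move is to pick any subgradient $\xi'\in\partial f(z')$ (such a $\xi'$ exists since $f$ is continuous and convex, so $\partial f$ is nonempty at every point). By part $(2)$ of Lemma 5, $Y_f=Y(f,z',\xi')$, and therefore part $(3)$ applied at the base point $z'$ yields
$$
f(z)=f(z'+v)=f(z')+\langle \xi', v\rangle.
$$
At the same time, part $(4)$ applied to the pair $(z',\xi')$ and $(z_0,\xi_0)$ gives $\langle \xi_0-\xi',v\rangle=0$ for every $v\in Y_f$, so $\langle \xi',v\rangle=\langle \xi_0,v\rangle=\ell(v)=\ell(z)-\ell(z')$. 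Substituting this into the previous identity yields $f(z)-f(z')=\ell(z)-\ell(z')$, i.e.\ $\varphi(z)=\varphi(z')$, as desired.

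Once this invariance along $Y_f$ is established, the remainder is purely formal: the value $\varphi(z)$ depends only on the coset $z+Y_f\in X_f=Z/Y_f$, so the prescription $\widehat{\varphi}(z+Y_f):=\varphi(z)$ defines a function on $X_f$ unambiguously, and the equality $\varphi=\widehat{\varphi}\circ\pi$ is tautological from the definition. There is no real obstacle here; the only subtle point is the use of part $(4)$ to replace the subgradient $\xi'$ at $z'$ with the fixed linear form $\xi_0=\ell$ when testing against vectors of $Y_f$, which is precisely what makes the linear correction $\ell$ pass through each coset consistently.
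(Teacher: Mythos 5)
Your argument is correct and is essentially identical to the paper's: both invoke Lemma \ref{Yf is a subspace not depending on z and xi}(3) at the base point $z'$ with an arbitrary $\xi'\in\partial f(z')$ to get $f(z)=f(z')+\langle\xi',z-z'\rangle$, and then Lemma \ref{Yf is a subspace not depending on z and xi}(4) to replace $\langle\xi',z-z'\rangle$ by $\langle\xi_0,z-z'\rangle$. The concluding well-definedness and factorization are handled the same way.
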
 
\begin{proof}
For all $z, z'\in Z$ with $z-z'\in Y_f$ we have
\begin{eqnarray*}
& & \varphi(z)-\varphi(z')=f(z)-f(z')-\langle \xi_0, z-z'\rangle \\
& & =f(z')+\langle \xi', z-z'\rangle-f(z')-\langle \xi_0, z-z'\rangle \\
& & =\langle \xi'-\xi_0, z-z'\rangle=0,
\end{eqnarray*}
where in the second equality we use Lemma \ref{Yf is a subspace not depending on z and xi}(3) with $z_1=z'$ and any $\xi'\in\partial f(z')$, and in the last equality we use Lemma \ref{Yf is a subspace not depending on z and xi}(4) (with  $\xi_2=\xi'$ and $\xi_1=\xi_0$).
\end{proof}

\begin{lem}
The function $\widehat{\varphi}:X_f\to\R$ is convex and continuous.
\end{lem}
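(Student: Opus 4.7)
The plan is to handle convexity and continuity separately, both by lifting from $X_f$ back to $Z$ via the linear quotient map $\pi$, and exploiting the defining identity $\widehat{\varphi}\circ\pi=\varphi$.

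For convexity, I would fix $u,u'\in X_f$ and $\alpha\in[0,1]$, pick arbitrary lifts $z\in\pi^{-1}(u)$ and $z'\in\pi^{-1}(u')$, and note that since $\pi$ is linear the point $\alpha z+(1-\alpha)z'$ is a lift of $\alpha u+(1-\alpha)u'$. Since $f$ is convex and $\ell$ is linear, $\varphi=f-\ell$ is convex, and the defining identity gives
$$\widehat{\varphi}\bigl(\alpha u+(1-\alpha)u'\bigr)=\varphi\bigl(\alpha z+(1-\alpha)z'\bigr)\leq \alpha\varphi(z)+(1-\alpha)\varphi(z')=\alpha\widehat{\varphi}(u)+(1-\alpha)\widehat{\varphi}(u').$$
This establishes convexity of $\widehat{\varphi}$ on $X_f$, independently of the choice of lifts (which is legitimate thanks to the previous lemma).

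For continuity, I would equip $X_f=Z/Y_f$ with its quotient norm $\|u\|_{X_f}=\inf\{\|z\|_Z:\pi(z)=u\}$, under which $X_f$ is a Banach space and $\pi$ is an open continuous linear surjection. Given $u_0\in X_f$ and $\varepsilon>0$, I would pick any lift $z_0\in\pi^{-1}(u_0)$ and use continuity of $\varphi$ at $z_0$ to produce $\delta>0$ with $|\varphi(z)-\varphi(z_0)|<\varepsilon$ whenever $\|z-z_0\|_Z<\delta$. For any $u\in X_f$ with $\|u-u_0\|_{X_f}<\delta$, the definition of the quotient norm yields a lift $z\in\pi^{-1}(u)$ with $\|z-z_0\|_Z<\delta$ (applied to $u-u_0$ and translated by $z_0$). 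Then $|\widehat{\varphi}(u)-\widehat{\varphi}(u_0)|=|\varphi(z)-\varphi(z_0)|<\varepsilon$, giving continuity at $u_0$.

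No step here looks like a serious obstacle: well-definedness of $\widehat{\varphi}$ has already been established, and everything else reduces to standard properties of quotient Banach spaces. If one preferred a slicker argument for continuity, one could instead invoke the general fact that a convex function on a Banach space is continuous as soon as it is locally bounded above somewhere; since $\varphi$ is locally bounded above on $Z$ and $\pi$ is an open map, this boundedness transfers to $\widehat{\varphi}$ on some open ball of $X_f$, and convexity does the rest.
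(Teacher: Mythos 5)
Your proposal is correct. For convexity it is word-for-word the paper's argument: lift both points linearly through $\pi$, use convexity of $\varphi$ downstairs, and note that the result is independent of the choice of lifts by the preceding lemma. For continuity, however, your primary argument is a direct $\varepsilon$--$\delta$ proof that uses nothing about convexity: given $u$ close to $u_0$ in quotient norm, you lift the difference to a short vector $w\in Z$, set $z=z_0+w$, and read off $|\widehat{\varphi}(u)-\widehat{\varphi}(u_0)|=|\varphi(z)-\varphi(z_0)|$. This is perfectly sound (and in fact proves continuity of $\widehat{\varphi}$ for \emph{any} continuous $\varphi$ that factors through $\pi$, not just convex ones). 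The paper instead goes through local boundedness: since $\varphi$ is bounded on $z_0+r_0B_Z$ and $\pi$ is open, $\widehat{\varphi}$ is bounded on the neighborhood $\pi(z_0)+\pi(r_0B_Z)$, and a locally bounded convex function on a Banach space is automatically continuous. You anticipate exactly this in your closing remark about a "slicker argument." Both routes are valid; yours is slightly more elementary and self-contained, while the paper's is shorter if one is willing to cite the standard boundedness criterion for continuity of convex functions, which is also the criterion the paper relies on elsewhere.
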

\begin{proof}
Let us first check that $\widehat{\varphi}$ is convex. Given $z, z'\in Z$ and $t\in [0,1]$ we have, since $2Y_f=Y_f$ and $\varphi$ is convex, that
\begin{eqnarray*}
& & \widehat{\varphi}\left( t(z+Y_f)+(1-t)(z'+Y_f) \right)=
\widehat{\varphi}\left( tz+(1-t)z'+Y_f\right)=\varphi(tz+(1-t)z')\\
& & \leq t \varphi(z)+(1-t)\varphi(z')=t \widehat{\varphi}(z+Y_f)+(1-t)\widehat{\varphi}(z'+Y_f).
\end{eqnarray*}
Therefore $\widehat{\varphi}$ is convex. Then, to prove that $\widehat{\varphi}$ is continuous, it is enough to see that $\widehat{\varphi}$ is locally bounded. Given $z_0+Y_f\in X_f$, because $\varphi$ is locally bounded, there exists some $r_0>0$ such that $\varphi(z_0 +r_0 B_Z)$ is bounded. But, by Lemma \ref{Yf is a subspace not depending on z and xi}
$$
\varphi(z_0+ r_0 B_Z)=\varphi(z_0+Y_f+r_0 B_Z)=\varphi(z_0+Y_f-Y_f+r_0 B_Z)=\widehat{\varphi}(z_0+Y_f+N),
$$
where $N :=Y_f+r_0 B_Z$ is a neighborhood of $0$ in $X_f=Z/Y_f$, so it follows that $\widehat{\varphi}$ is bounded on the neighborhood $z_0+Y_f+N$ of $z_0+Y_f$ in $X_f$.
\end{proof}
To finish the proof of Theorem \ref{rigid global behaviour of convex functions in Banach spaces} let us set $$c:=\widehat{\varphi}.$$ 
Since $\ell=\xi_0\in\partial f(z_0)$, we have that 
$$\varphi(z)=f(z)-\ell(z)\geq f(z_0)-\ell(z_0)=\varphi(z_0)=:a,$$ 
hence $\widehat{\varphi}\circ\pi \geq a$, which implies that $c\geq a\in c(X_f)$. It only remains for us to check that $\widehat{\varphi}$ is not constant on any line. Suppose, for the sake of contradiction, that there exist $\widehat{z}_0=z_0+Y_f, \widehat{v_0}=v_0+Y_f\in X_f$ such that $\widehat{v_0}\neq 0$ (that is, $v_0\notin Y_f$), and $\widehat{\varphi}( \widehat{z}_0+t\widehat{v}_0)=\widehat{\varphi}(\widehat{z}_0)$ for all $t\in\R$. This means that
$$
\varphi(z_0)=\widehat{\varphi}(z_0+Y_f)=\widehat{\varphi}(z_0+Y_f+t(v_0+Y_f))=
\widehat{\varphi}(z_0+tv_0+Y_f)=\varphi(z_0+tv_0),
$$
which since $\varphi=f-\xi_0$ implies $f(z_0+tv_0)=f(z_0)+\langle \xi_0, tv_0\rangle$ for all $t\in\R$, that is, $v_0\in Y(f, x_0, \xi_0)=Y_f$, contrary to the assumption.
\end{proof}

\bigskip

\begin{proof}[Proof of Theorem \ref{rigid global behaviour of convex functions in separable Banach spaces}] By Theorem \ref{rigid global behaviour of convex functions in Banach spaces} we already know that, given a continuous convex function $f:Z\to\R$ there exists a unique closed linear subspace $Y_f$ of $Z$ such that, for $X_f :=Z/Y_{f}$ and the natural projection $\pi=\pi_{f}:Z\to X_f$, we have a decomposition
$$
f(z)=c(\pi(z)) +\ell(z) \textrm{ for all } z\in Z,
$$
where $\ell\in Z^{*}$, $c:X_f\to [a, \infty)$ is not constant on any line, and $a$ is the (attained) infimum of the set $c(X_f)$. Thus it will suffice to check that $c$ is essentially directionally coercive. This is a consequence of the following.

\begin{prop}\label{for separable not constant on any line implies ed coercive}
Let $X$ be a separable Banach space, and $\varphi: X\to [a, \infty)$ be a continuous convex function which is not constant on any line. Then $\varphi$ is essentially directionally coercive.
\end{prop}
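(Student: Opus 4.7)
The plan is to recast essential directional coercivity as a strict-support statement for a pointed closed convex cone, and then to exploit separability through a single Hahn--Banach series construction.

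First, let $\psi := \varphi^{\infty}$, i.e.\ $\psi(v) = \lim_{t\to\infty} t^{-1}(\varphi(x_{0}+tv) - \varphi(x_{0}))$, which is independent of $x_{0}$, lower semicontinuous, sublinear, and takes values in $[0, \infty]$ (nonnegativity coming from $\varphi \geq a$). A convex bounded-below function of a single real variable is constant iff both its recession slopes vanish, so the hypothesis that $\varphi$ is not constant on any line is equivalent to
\[
\psi(v) + \psi(-v) > 0 \quad \textrm{for every } v \in X \setminus \{0\}. \qquad (\star)
\]
For $\ell \in X^{*}$ one has $(\varphi - \ell)^{\infty} = \psi - \ell$, and a convex bounded-below function on $\R$ tends to $+\infty$ iff its recession slope is strictly positive; hence $\varphi - \ell$ is directionally coercive iff $\ell(v) < \psi(v)$ for every $v \neq 0$ (the symmetric condition at $-v$ is automatic once $v$ ranges over $X \setminus \{0\}$). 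The task therefore reduces to producing $\ell \in X^{*}$ with $\ell < \psi$ on $X \setminus \{0\}$.

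Next, in the separable Banach space $X \times \R$ consider the closed convex cone $K := \mathrm{epi}(\psi) = \{(v,t) : t \geq \psi(v)\}$. Combined with $\psi \geq 0$, condition $(\star)$ forces $K \cap (-K) = \{(0,0)\}$, so $K$ is pointed. If I produce $(\xi, \alpha) \in X^{*} \times \R$ strictly positive on $K \setminus \{(0,0)\}$, then testing on $(0, 1) \in K$ forces $\alpha > 0$, and $\ell := -\xi/\alpha$ satisfies $\ell(v) < \psi(v)$ for every $v \neq 0$ (trivially when $\psi(v) = \infty$).

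Everything thus reduces to the following lemma, which I would prove as the main technical step: \emph{any pointed closed convex cone $K$ in a separable Banach space $Y$ admits $L \in Y^{*}$ with $L > 0$ on $K \setminus \{0\}$}. Choose $\{k_{n}\}$ dense in $K \cap S_{Y}$; since $K$ is pointed, $d_{n} := d(k_{n}, -K) > 0$, and Hahn--Banach (in its quantitative distance form) yields $\ell_{n} \in Y^{*}$ with $\|\ell_{n}\| = 1$, $\ell_{n} \leq 0$ on $-K$ (equivalently $\ell_{n} \geq 0$ on $K$), and $\ell_{n}(k_{n}) \geq d_{n}/2$. Set $L := \sum_{n} 2^{-n} \ell_{n}$; the series converges in norm and $L \geq 0$ on $K$. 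If $L(v) = 0$ for some unit vector $v \in K$, then $\ell_{n}(v) = 0$ for all $n$, so for a subsequence $k_{n_{j}} \to v$ one obtains
\[
d_{n_{j}}/2 \leq \ell_{n_{j}}(k_{n_{j}}) = \ell_{n_{j}}(k_{n_{j}} - v) \leq \|k_{n_{j}} - v\| \longrightarrow 0,
\]
while $d_{n_{j}} \to d(v, -K) > 0$ by continuity of the distance function and pointedness, a contradiction. Applying this to our cone $K \subset X \times \R$ finishes the proof. The main obstacle I anticipate is precisely this last step: density together with plain Hahn--Banach only yields $L \geq 0$, and the upgrade to \emph{strict} positivity on all of $K \setminus \{0\}$ (not just on the dense set $\{k_{n}\}$) requires both the quantitative margin $\ell_{n}(k_{n}) \geq d_{n}/2$ and the continuity of $v \mapsto d(v, -K)$, which is where separability of $X$ is used in an essential way.
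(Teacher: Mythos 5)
Your proof is correct, and it takes a genuinely different route from the paper's. The paper works with the renormalized function $\psi(x)=\varphi(x)-\varphi(0)-\langle\xi_0,x\rangle$ (with $\xi_0\in\partial\varphi(0)$), forms the sublevel set $C=\{\psi\le 1\}$, uses separability (a Lindel\"of argument) to cover $X\setminus C$ by countably many supporting open half-spaces with normals $\xi_n\in\partial\psi(x_n)$, sums these with weights $2^{-n-1}/\max\{1,\|\xi_n\|\}$ to get the perturbation $\xi$, and then verifies directional coercivity of $\psi-\xi$ by a three-case analysis on the sign of $\langle\xi,v\rangle$. You instead pass to the recession function $\varphi^{\infty}$, translate everything into the geometry of the closed convex cone $K=\mathrm{epi}(\varphi^{\infty})\subset X\times\R$, and reduce the whole proposition to a clean standalone fact: a pointed closed convex cone in a separable Banach space admits a strictly positive continuous linear functional. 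Both proofs exploit separability in the same way --- extract a countable family of functionals, then take an absolutely convergent weighted sum --- but your choice of functionals (quantitative Hahn--Banach separators $\ell_n$ with $\ell_n(k_n)\ge d(k_n,-K)/2$) and your endgame (the $1$-Lipschitz continuity of $w\mapsto d(w,-K)$ forcing the contradiction) replace the paper's more ad hoc three-case analysis with a single limiting argument. Your version buys conceptual clarity and isolates a reusable lemma of independent interest; the paper's is more elementary in that it never invokes recession functions or epigraphical cones and may be more accessible to a reader not comfortable with those tools. The only small point worth spelling out in a written-up version is the standard duality identity $d(k,-K)=\sup\{\langle\ell,k\rangle:\|\ell\|\le 1,\ \ell\le 0 \text{ on } -K\}$ (or, equivalently, a separation of $-K$ from the open ball $B(k,d(k,-K))$), which is what licenses the quantitative lower bound $\ell_n(k_n)\ge d_n/2$; and the observation that $\varphi^{\infty}$ is lower semicontinuous because it is a supremum of continuous functions $v\mapsto t^{-1}(\varphi(x_0+tv)-\varphi(x_0))$, so that $K$ really is closed.
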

\begin{proof}
Define $\psi(x)=\varphi(x)-\varphi(0)-\langle \xi_0, x\rangle$, where $\xi_0\in\partial\varphi(0)$. We have that $\psi(0)=0\leq\psi(x)$ for all $x\in X$, $\psi$ is convex and continuous, and $\psi$ is not constant on any line (indeed, if for some $x_0, v_0\in X$ we had $\psi(x_0+tv_0)=\psi(x_0)$ for all $t\in\R$ then we would get $\varphi(x_0+tv_0)=\varphi(x_0)+t\langle \xi_0, v_0\rangle$ for all $t\in\R$. If $\langle \xi_0, v_0\rangle\neq 0$ this equality contradicts the assumption that $\varphi\geq a$. And if $\langle \xi_0, v_0\rangle=0$ it contradicts the assumption that $\varphi$ is not constant on any line). 

Next consider the closed convex set
$$
C :=\{x\in X : \psi(x)\leq 1\}.
$$
For every $x\in \partial C$ we choose $\xi_x\in\partial \psi(x)$, so we can write
$$
C=\bigcap_{x\in\partial C}H_{x}^{-}, \textrm{ where } H_{x}^{-}:=\{y\in X : \langle \xi_x, y-x\rangle \leq 0\},
$$
or equivalently
$$
X\setminus C=\bigcup_{x\in\partial C}H_{x}^{+}, \textrm{ where } H_{x}^{+}:=\{y\in X : \langle \xi_x, y-x\rangle > 0\},
$$
and since $X$ is separable there exists a sequence $\{x_n\}_{n\in\N}\subset\partial C$ such that, writing $\xi_n=\xi_{x_n}$, we have
\begin{eqnarray}\label{complement of C as a countable union of halfspaces}
X\setminus C=\bigcup_{n=1}^{\infty}\{y\in X: \langle \xi_n, y-x_n\rangle >0\}.
\end{eqnarray}
\begin{lem}\label{the xin separate points}
The set $\{\xi_n : n\in\N\}\subset X^{*}$ separates points of $X$ (that is to say, for every $v\neq 0$ there exists $n=n(v)$ such that $\xi_n (v)\neq 0$).
\end{lem}
\begin{proof}
Fix $v\in X\setminus\{0\}$. Since $\psi$ is convex and is not constant on the line $\{tv : t\in\R\}$, we have $\lim_{t\to\infty}\psi(tv)=\infty$ or $\lim_{t\to -\infty}\psi(tv)=\infty$. Up to replacing $v$ with $-v$ (and noting that  $\xi_n(v)\neq 0$ if and only if $\xi_n(-v)\neq 0$) we may assume without loss of generality that $\lim_{t\to\infty}\psi(tv)=\infty$.
Then we can find $s_v>0$ such that $\psi(s_v v)>2$, which means that $s_v v\in X\setminus C$, and according to \eqref{complement of C as a countable union of halfspaces} there exists $n=n(v)$ such that 
$
\langle \xi_n, s_v v-x_n\rangle >0.
$
On the other hand, since $0\in C$ we also have
$
\langle\xi_n, -x_n\rangle\leq 0.
$
Therefore the function $t\mapsto \langle \xi_n, t v-x_n\rangle$ is not constant, which implies that $t\mapsto \langle\xi_n, tv\rangle$ is not constant either, and this means that $\langle\xi_n, v\rangle\neq 0$.
\end{proof}

Also note that we have
\begin{eqnarray}\label{psi bounded below by a coercive sup}
\psi(x)\geq \sup_{n\in\N}\max\left\{0, \, \psi(x_n)+\langle\xi_n, x-x_n\rangle\right\}
\end{eqnarray}
for all $x\in X$. Now let us define
$$
\xi=\sum_{n=1}^{\infty}\frac{1}{2^{n+1} \max\{1, \|\xi_n\|\}}\xi_n \in X^{*}.
$$
The proof of Proposition \ref{for separable not constant on any line implies ed coercive} will be complete as soon as we check that
$\psi-\xi$ is directionally coercive. To this end, for each $x\in X$ and $v\in X\setminus\{0\}$, let us see that the function
$$
h(t):=\psi(x+tv)-\langle\xi, x+tv\rangle
$$
satisfies $\lim_{t\to\infty}h(t)=\infty$. We will distinguish three cases.

{\em Case 1.} Suppose that $\langle \xi, v\rangle <0$. Then, since $\psi\geq\psi(0)=0$, we have
$
h(t)\geq -t\langle\xi, v\rangle -\langle\xi, x\rangle\to\infty
$
as $t\to\infty$.

{\em Case 2.} Suppose that $\langle \xi, v\rangle >0$. Then there must exist some $n\in\N$ with $\langle\xi_n, v\rangle >0$, and in particular
$$
\alpha:=\sup_{n\in\N}\, \frac{\langle\xi_n, v\rangle}{\max\{1, \|\xi_n\|\}}>0.
$$
Notice that, for all $t>0$,
\begin{eqnarray}\label{t halves bound}
t\langle \xi, x\rangle=\sum_{n=1}^{\infty}\frac{ t\langle \xi_n, x\rangle}{2^{n+1}\max\{1, \|\xi_n\|\}}\leq \frac{t}{2}\alpha,
\end{eqnarray}
and fix some $k\in\N$ such that 
\begin{eqnarray}\label{two thirds of alpha bound}
\frac{\langle\xi_k, v\rangle}{\max\{1, \|\xi_k\|\}}>\frac{2}{3}\alpha >0.
\end{eqnarray}
By combining \eqref{t halves bound} and \eqref{two thirds of alpha bound} we then have, for all $t>0$, that
$$
t\langle \xi_k, v\rangle >\frac{2}{3}  \alpha t \max\{1, \|\xi_k\|\} \geq \frac{2}{3} \alpha t \geq \frac{4}{3} t\langle \xi, x\rangle,
$$
and therefore
$$
-t\langle \xi, x\rangle \geq -\frac{3}{4} t \langle \xi_k, v\rangle,
$$
which implies
\begin{eqnarray*}
& & h(t)=\psi(x+tv)-t\langle\xi, v\rangle -\langle \xi, x\rangle\\
& & \geq \psi(x_k)+\langle\xi_k, x+tv -x_k\rangle -\frac{3}{4}t \langle \xi_k, v\rangle -\langle \xi, x\rangle \\
& & = \psi(x_k)+\langle\xi_k, x-x_k\rangle+\frac{1}{4} t\langle\xi_k, v\rangle -\langle \xi, x\rangle \to\infty
\end{eqnarray*}
as $t\to\infty$.

{\em Case 3.}  Suppose finally that $\langle \xi, v\rangle =0$. Since $v\neq 0$, Lemma \ref{the xin separate points} gives us some $k$ with $\langle\xi_k, v\rangle\neq 0$. Then for $\langle \xi, v\rangle =0$ to be true there have to be distinct numbers $n, m\in\N$ so that $\langle\xi_n, v\rangle>0$ and $\langle\xi_m, v\rangle<0$. In particular
$$
\sup_{j\in\N}\, \frac{\langle\xi_j, v\rangle}{\max\{1, \|\xi_j\|\}}>0,
$$
and therefore the same argument as in Case 2 applies.
\end{proof}
The proof of Theorem \ref{rigid global behaviour of convex functions in separable Banach spaces} is now complete.
\end{proof}

\bigskip

\begin{proof}[Proof Theorem \ref{rigid global behaviour of convex functions in Hilbert space}] By the proof of Theorem \ref{rigid global behaviour of convex functions in separable Banach spaces} we know that there exists a unique closed linear subspace $Y_f$ of $Z$ such that, for the quotient space $X_f :=Z/Y_{f}$ and the natural projection $\pi=\pi_{f}:Z\to X_f$, the function $f$ can be written in the form
$$
f(z)=c(\pi(z)) +\langle \xi, z \rangle \textrm{ for all } z\in Z,
$$
where $c:X_f\to\R$ is a convex function which is essentially directionally coercive, more precisely, $c(z+Y_f)=\widehat{\varphi}(z+Y_f) :=\varphi(z):=f(z)-\langle\xi, z\rangle$, and $\xi\in\partial f(z_0)$ (for some $z_0\in Z$). Since $Z$ is a Hilbert space, the quotient space $X_f=Z/Y_f$ is isometric to $Y_{f}^{\perp}$, and with this identification the natural projection $\pi:Z\to X_f$ becomes the orthogonal projection $P_{X_{f}}:Z\to X_{f}$. If we consider the orthogonal decomposition $Z=X_{f}\oplus Y_{f}$ and write $z=(x,y)$ with $x\in X_{f}$, $y\in Y_f$ and similarly we identify $Z^{*}$ with $Z=X_{f}\oplus Y_{f}$ and write $\xi=(\xi_X, \xi_Y)$ then we have that
$$
f(x,y)=\langle \xi_X, x\rangle +\langle\xi_Y, y\rangle +\widehat{\varphi}(x),
$$
where $\widehat{\varphi}:X_f\to\R$ is essentially directionally coercive. Then it is easy to check that $\eta_Y=\xi_Y$ for every $\eta=(\eta_X, \eta_Y)\in\partial f(z)$ and every $z\in Z$. Moreover,
$$
f(x,y)=\widetilde{c_f}(x)+\langle\xi_Y, y\rangle,
$$
where $$\widetilde{c_f}(x):=\langle \xi_X, x\rangle +\widehat{\varphi}(x)=f(x,0)$$ is essentially directionally coercive. Theorem \ref{rigid global behaviour of convex functions in Hilbert space} will follow immediately from these observations as soon as we check that 
$$
X_{f}=\overline{\textrm{span}}\{u-w : u\in\partial f(z), w\in\partial f(y), z, y\in Z\}.
$$ 
To this end, let $W$ stand for the closed subspace on the right side. If $\xi_j\in\partial f(z_j)$, $j=1, 2$, then by Lemma \ref{Yf is a subspace not depending on z and xi}(4) we have $\xi_1-\xi_2\in Y_{f}^{\perp}=X_f$, and it follows that $W\subseteq X_f$. Now, if we did not have $W=X_f$ then there would exist $u\in X_f\setminus\{0\}$ such that $u\perp W$. For each $t\in\R$ choose $\eta_t\in \partial f(tu)$. Then $\eta_t-\eta_0\in W$, and
$$
0\leq f(tu)-f(0)-\langle \eta_0, tu\rangle \leq\langle \eta_t-\eta_0, tu\rangle=0
$$
for all $t\in\R$,
which implies that $u\in Y_f$, a contradiction. Thus we must have $W=X_f$.
\end{proof}

\medskip

\section{Further remarks}

\begin{rem}
{\em If $Z=\R^n$ then it is not difficult to show that every directionally coercive convex function is coercive. Therefore Theorem \ref{rigid global behaviour of convex functions} can be obtained as a corollary to Theorem \ref{rigid global behaviour of convex functions in Hilbert space}.}
\end{rem}

One can also restate Theorem \ref{rigid global behaviour of convex functions} in the following equivalent form.
\begin{thm}\label{rigid global behaviour of convex functions version 2}
For every convex function $f:\R^n\to\R$ there exist a unique linear subspace $X_f$ of $\R^n$ such that $f$ can be written in the form
$$
f(x)=\varphi(P_{X_{f}}(x)) +\ell(x) \textrm{ for all } x\in\R^n,
$$
where $\ell:\R^n\to\R$ is linear and $\varphi:X_f\to\R$ is a convex function which attains a strict minimum.
\end{thm}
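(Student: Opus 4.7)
The plan is to obtain Theorem \ref{rigid global behaviour of convex functions version 2} as a refinement of Theorem \ref{rigid global behaviour of convex functions}. First, I would note that in finite dimensions a convex function $\varphi:X\to\R$ attains a strict (i.e.\ unique) minimum if and only if it is coercive: the nontrivial direction goes by contradiction, extracting from a bounded sequence $\{y_n\}$ with $\|y_n\|\to\infty$ and $\varphi(y_n)$ bounded a unit direction $v=\lim(y_n-x_*)/\|y_n-x_*\|$ along which convexity forces $\varphi(x_*+tv)\le\varphi(x_*)$ for all $t>0$, contradicting strict minimality at $x_*$. This reduces the task to sharpening the decomposition of Theorem \ref{rigid global behaviour of convex functions} so that the coercive part attains its minimum at exactly one point.

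The key claim is then: for any coercive convex function $h:X\to\R$ on a finite-dimensional $X$, there exists $\ell_1\in X^*$ such that $h-\ell_1$ has a unique minimizer. I would prove it by Legendre--Fenchel duality. Coerciveness and continuity of $h$ force its conjugate $h^*(\ell)=\sup_{x\in X}(\langle\ell,x\rangle-h(x))$ to be finite-valued, hence continuous and locally Lipschitz, on all of $X^*$, and therefore differentiable almost everywhere by Rademacher's theorem. The standard convex-analysis identity $\mathrm{argmin}(h-\ell_1)=\partial h^*(\ell_1)$ then says that $h^*$ is differentiable at $\ell_1$ exactly when $h-\ell_1$ attains its minimum at a single point, so any $\ell_1$ from the full-measure set of differentiability points of $h^*$ works.

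Granted the claim, the proof is mechanical. Apply Theorem \ref{rigid global behaviour of convex functions} to write $f(x)=c_f(P_{X_f}(x))+\langle v_f,x\rangle$ with $c_f:X_f\to\R$ essentially coercive, choose $\ell_0\in X_f^*$ making $h:=c_f-\ell_0$ coercive, and produce $\ell_1\in X_f^*$ via the claim so that $\varphi:=h-\ell_1$ has a unique minimizer. Setting $\ell(x):=\langle v_f,x\rangle+(\ell_0+\ell_1)(P_{X_f}(x))$ yields
\[
f(x)=\varphi(P_{X_f}(x))+\ell(x),
\]
with $\ell$ linear on $\R^n$ and $\varphi$ convex on $X_f$ attaining a strict minimum. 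For uniqueness of $X_f$, any decomposition $f=\widetilde\varphi\circ P_X+\widetilde\ell$ of the stated kind has $\widetilde\varphi$ coercive by the equivalence of the first paragraph; absorbing the restriction of $\widetilde\ell$ to $X$ into $\widetilde\varphi$ matches the format of Theorem \ref{rigid global behaviour of convex functions} (with $\widetilde\varphi+\widetilde\ell|_X$ essentially coercive on $X$), and its uniqueness clause forces $X=X_f$.

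The only real obstacle is the key claim of the second paragraph; the rest is a translation of the essentially coercive decomposition already provided. The claim is standard in finite-dimensional convex analysis but does rely on one nontrivial analytic ingredient, namely the almost-everywhere differentiability of locally Lipschitz functions on $\R^k$ applied to the Fenchel conjugate $h^*$. An alternative, more geometric route through exposed points of the bounded convex set $\mathrm{argmin}(h)$ is conceivable, but it still requires essentially the same selection principle.
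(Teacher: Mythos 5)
Your overall route coincides with the paper's: reduce to the essentially coercive decomposition of Theorem~\ref{rigid global behaviour of convex functions} and then obtain the extra linear perturbation by locating a differentiability point of the Fenchel conjugate, using that the set of minimizers of $h-\ell_1$ equals $\partial h^*(\ell_1)$ (the paper packages this as strong exposedness of $\varphi$ at $\nabla\varphi^*(\xi_0)$, but the mechanism is the same). There is, however, a concrete error in your justification of the key claim. It is \emph{not} true that coercivity and continuity of $h$ force $h^*$ to be finite on all of $X^*$: for instance $h(x)=\|x\|$ is continuous, convex and coercive, yet $h^*(\ell)=+\infty$ whenever $\|\ell\|>1$. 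Global finiteness of $h^*$ is equivalent to \emph{supercoercivity} ($h(x)/\|x\|\to\infty$), which is strictly stronger than coercivity. What mere coercivity gives, via the linear lower bound $h(x)\ge\varepsilon\|x\|-C$ for some $\varepsilon>0$, is that $h^*$ is finite (hence locally Lipschitz) on the open ball of radius $\varepsilon$ about the origin, i.e.\ $0\in\textrm{int}(\textrm{dom}\,h^*)$. This weaker conclusion is precisely what the paper extracts from the Moreau--Rockafellar theorem, and it is all you need: apply Rademacher on $\textrm{int}(\textrm{dom}\,h^*)$, pick a differentiability point $\ell_1$ with $\|\ell_1\|<\varepsilon$, and note that $h-\ell_1$ is still coercive, so its (unique) minimizer exists. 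With that correction the rest of your argument is sound.

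A smaller overstatement is the opening ``if and only if'': a coercive convex function on $\R^n$ need not attain a strict minimum (e.g.\ $x\mapsto\max(\|x\|-1,0)$ has a whole ball of minimizers). You only use, and correctly prove, the implication that a strict minimum forces coercivity, which is what both the reduction and the uniqueness argument require; this does not affect the proof, but it should be stated as a one-way implication rather than an equivalence.
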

Indeed, from Theorem \ref{rigid global behaviour of convex functions} it is obvious that this statement is true if we only ask that $\varphi$ be essentially coercive, that is, there exist linear forms $\ell:\R^n\to\R$ and $\xi:X_f\to\R$, and a function $\varphi:X_f\to\R$ such that $f=\varphi\circ P_{X_f}+\ell$, and $\varphi-\xi$ is coercive. But then by a result due to Moreau and Rockafellar (see \cite[Corollary 4.4.11]{BorweinVanderwerffbook}) the Fenchel conjugate $\varphi^{*}$ of $\varphi$ is continuous at $\xi$, and $\xi$ is in the interior of the domain $D$ of $\varphi^{*}$. Since the convex function $\varphi^{*}: \textrm{int}(D)\subseteq X_{f}\to\R$ is differentiable almost everywhere we may find $\xi_0\in \textrm{int}(D)$ such that $\varphi^{*}$ is differentiable at $\xi_0$, and therefore, according to \cite[Theorem 5.2.3]{BorweinVanderwerffbook}, $\varphi$ is strongly exposed at $x_0:=\nabla \varphi^{*}(\xi_0)$, which means that $\varphi-\xi_0$ attains a strong minimum at $x_0$. Thus, we can write
$$
f=(\varphi-\xi_0)\circ P_{X_f}+\xi_0\circ P_{X_{f}}+\ell,
$$
that is, 
$$
f=\widetilde{\varphi}\circ P_{X_f}+\widetilde{\ell},
$$
where $\widetilde{\varphi}:=\varphi-\xi_0:X_{f}\to\R$ attains a strict maximum and $\widetilde{\ell}:=\xi_0\circ P_{X_{f}}+\ell:\R^n\to\R$ is linear. This proves Theorem \ref{rigid global behaviour of convex functions version 2}. Conversely, it is not difficult to show that every convex function $\psi:\R^n\to\R$ which attains a strict minimum must be coercive, from which one can easily see that Theorem \ref{rigid global behaviour of convex functions version 2} also implies Theorem \ref{rigid global behaviour of convex functions}. 

It is then natural to wonder whether a similar result should hold true if we replace $\R^n$ with a separable Hilbert space. That is to say, is the statement of Theorem \ref{rigid global behaviour of convex functions in Hilbert space} true if we ask that $c_f:X_f\to\R$ be a convex function which attains a strict minimum at some point? The answer is negative, as we next show.

\begin{ex}
{\em Let $\theta:\R\to [0, \infty)$ be the function defined by \eqref{definition of theta} in Example \ref{not true for nonseparable Hilbert spaces}, let $Z=\ell_{2}$, denote by $\{e_n\}_{n\in\N}$ and $\{e_n^{*}\}_{n\in\N}$ the canonical bases of $Z$ and $Z^{*}$, and define $f:Z\to [0, \infty)$ by
$$
f(x)=\sum_{n=1}^{\infty}\frac{1}{2^{n}}\theta\left( e_{n}^{*}(x)-n\right).
$$
It is clear that $f$ is convex and continuous. Let us see that $f$ is not constant on any line. For each $x, v\in Z$ with $v\neq 0$ we have $e_{n}^{*}(v)\neq 0$ for some $n$. Suppose for instance $e_{n}^{*}(v)>0$. Then
$$
f(x+tv)\geq \theta( e_{n}^{*}(x)+t e_{n}^{*}(v)-n)\to\infty
$$
as $t\to\infty$. Similarly, if $e_{n}^{*}(v)<0$ we see that $\lim_{t\to-\infty}f(x+tv)=\infty$. Therefore, with the notation of Theorem \ref{rigid global behaviour of convex functions in Hilbert space} we have $X_{f}=Z$, and one can find a continuous linear form $\xi$ such that $f-\xi$ is essentially directionally coercive. However, let us show that there is no continuous linear form $\xi:Z\to\R$ such that $f-\xi$ attains a strict minimum. For the sake of contradiction, suppose $f-\xi$ had a strict minimum at $x$, take $m=m_x$ large enough so that  $m>e_{m}^{*}(x)$, and set $v=e_m$. Then we would have $\theta\left( e_{m}^{*}(x)+t-m\right)=\theta\left( e_{m}^{*}(x)-m\right)=0$ for all $|t|< m-e_{m}^{*}(x)$, hence
$$
f(x+tv)=\frac{1}{2^m}\theta\left( e_{m}^{*}(x)+t-m\right)+ \sum_{n\neq m}\frac{1}{2^n} \theta\left( e_{n}^{*}(x)-n\right)=\sum_{n=1}^{\infty}\frac{1}{2^{n}}\theta\left( e_{n}^{*}(x)-n\right)=f(x),
$$
and also
$$
(f-\xi)(x+tv)=(f-\xi)(x)-t\xi(v) \,\,\, \textrm{ for } \,\,\, |t|< m-e_{m}^{*}(x),
$$
which is impossible if $f-\xi$ attains a strict minimum at $x$. 
}
\end{ex}

\medskip


\end{document}